\theoremstyle{plain}  
\newtheorem*{acknowledgements}{Acknowledgements}
\newtheorem{theorem}{\bf Theorem}
\newtheorem{lemma}{\bf Lemma}
\newtheorem{corollary}{\bf Corollary}
\renewcommand{\theoldthm}{\Alph{oldthm}}
\renewcommand{\theoldlemma}{\Alph{oldlemma}}
\newtheorem{remark}{\bf Remark}
\renewcommand{\Im}{\mathop{\mathrm{Im}}\nolimits}
\renewcommand{\Re}{\mathop{\mathrm{Re}}\nolimits}
\newcommand{\g}{\gamma}
\renewcommand{\a}{\alpha}
\renewcommand{\b}{\beta}
\renewcommand{\t}{\tau}
\newcommand{\res}{\operatorname{{\sf res}}}
\begin{document}

\title{Cosine polynomials with restrictions \\on their algebraic representation}
\author{Kristina Oganesyan}
\address{Centre de Recerca Matem\`atica and Universitat Aut\`onoma de Barcelona, Lomonosov Moscow State University and Moscow Center for fundamental and applied mathematics}
\email{oganchris@gmail.com}
\thanks{The work was supported by the Moebius Contest Foundation for Young Scientists and the Foundation for the Advancement of Theoretical Physics and Mathematics $``$BASIS$"$ (grant no 19-8-2-28-1).}
\date{}

\begin{abstract}
We prove that for any even algebraic polynomial $p$ one can find a cosine polynomial with an arbitrary small $l_1$-norm of coefficients such that the first coefficients of its representation as an algebraic polynomial in $\cos x$ coincide with those of $p$.
\end{abstract}
\keywords{Chebyshev polynomials, Vandermonde matrices, $l_1$-norm.}
\maketitle

\section{Introduction} 
The work is devoted to the following problem. Suppose we are given a cosine polynomial $\sum_{k=0}^n a_k\cos kx,\;a_k\in\mathbb{R}$. With the help of Chebyshev polynomials $T_k(x)$, we can rewrite it as $\sum_{k=0}^n b_k\cos ^kx$, an algebraic polynomial in $\cos x$, according to the equality $\cos kx=T_k(\cos x)$. Conversely, any algebraic polynomial in $\cos x$ can be represented as a trigonometric one. So one can pass from one of these representations to another choosing the most appropriate one of the bases $\{\cos kx\}_{k=0}^{\infty}$ and $\{\cos^kx\}_{k=0}^{\infty}$. But what if we look at all the cosine polynomials whose certain coordinates with respect to the basis $\{\cos^kx\}_{k=0}$ are fixed? In other words, if we fix some $K\subset \mathbb{N}$, some numbers $\{c_k\}_{k\in K}$ and consider
\begin{align*}
A(K,\{c_k\}):=\Big\{\{a_k\}_{k=0}^n,\;n\in\mathbb{N}:\;\sum_{k=0}^n a_k\cos kx\equiv \sum_{k=0}^n b_k\cos ^kx,\;b_k=c_k\;\text{for}\;k\in K\Big\},
\end{align*}  
what can we say about $\sum_{k=0}^n|a_k|$ if we know that $\{a_k\}_{k=0}^{n}\in A(K,\{c_k\})$? Can we find a trigonometric polynomial belonging to $A(K,\{c_k\})$ with $``$small$"$ $l_1$-norm of the coefficients? If the answer is positive, then it is possible to adjust any $\sum_{k\in K}c_k\cos^kx$ by adding a small trigonometric polynomial so that the coefficients of our sum at $\cos^kx,\;k\in K,$ are equal to zero. We obtain the following

\begin{theorem}\label{prince} Let $p,s\in\mathbb{N}$ and $(a_0,a_1,...,a_{p-1})\in\mathbb{R}^p$. Then for $r\geq C_1(p,s)$ there exist a vector of coefficients $(b_s,b_{s+1},...,b_r)\in\mathbb{R}^{r-s+1}$ and a polynomial $g(x)$, $\deg g=2r-2p$, such that
\begin{align}\label{cond_1}
\sum_{k=s}^{r}b_k\cos 2kx-(\cos x)^{2p} g(\cos x)\equiv \sum_{t=0}^{p-1}a_t \cos ^{2t}x
\end{align}  
and
\begin{align}\label{cond_2}
\sum_{k=s}^{r}|b_k|<\frac{C_2(p,s)}{r}\sum_{t=0}^{p-1}|a_t|,
\end{align}
where $C_1(p,s):=\max( 16p^2s^{4p-1}, 8L^{2p-1}p^3),\;L=4.56...$, and $C_2(p,s):=2^{16}p^{4p+9}s^{4p-1}$.
\end{theorem}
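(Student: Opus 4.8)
The plan is to recast the identity \eqref{cond_1} as a finite linear system and to solve it with a vector $(b_k)$ supported on just $p$ widely separated indices. First I would substitute $u=\cos^2 x$. Since $\cos 2kx=T_k(2\cos^2x-1)=:\widetilde T_k(u)$ is a polynomial of degree $k$ in $u$, and since both sides of \eqref{cond_1} are even in $\cos x$, the polynomial $g$ must itself be even, say $g(y)=h(y^2)$ with $\deg h=r-p$, so that $(\cos x)^{2p}g(\cos x)=u^p h(u)$. Thus \eqref{cond_1} is equivalent to the polynomial congruence
\begin{align*}
\sum_{k=s}^{r}b_k\,\widetilde T_k(u)\equiv\sum_{t=0}^{p-1}a_t u^{t}\pmod{u^p},
\end{align*}
the unknown $h$ merely absorbing all terms of degree $\ge p$ (keeping $r$ in the support of $b$ makes $\deg g$ exactly $2r-2p$). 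So all of \eqref{cond_1} amounts to requiring that the coefficient of $u^{j}$ in $\sum_k b_k\widetilde T_k$ equal $a_j$ for $0\le j\le p-1$: a system of $p$ linear equations in the $r-s+1$ unknowns $b_s,\dots,b_r$.

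Next I would write this system out explicitly. From $\widetilde T_k^{(j)}(0)=2^j T_k^{(j)}(-1)=(-1)^{k+j}2^j T_k^{(j)}(1)$ together with the classical identity $T_k^{(j)}(1)=\prod_{i=0}^{j-1}\frac{k^2-i^2}{2i+1}$, the coefficient of $u^j$ in $\widetilde T_k$ equals $(-1)^{k+j}\frac{4^j}{(2j)!}\prod_{i=0}^{j-1}(k^2-i^2)$. Putting $\widetilde b_k:=(-1)^k b_k$ and using that $\big\{\prod_{i<j}(w-i^2)\big\}_{j\ge 0}$ is a monic, hence triangular, basis of the space of polynomials in $w$, the system becomes equivalent to the moment system
\begin{align*}
\sum_{k=s}^{r}\widetilde b_k\,(k^2)^{j}=S_j,\qquad j=0,1,\dots,p-1,
\end{align*}
where each $S_j$ is an explicit linear combination of $a_0,\dots,a_{p-1}$, with $S_0=a_0$ and $|S_j|\le c(p)\sum_{t}|a_t|$; the growth of $c(p)$ comes from the factors $\tfrac{(2j)!}{4^{j}}$ and from the entries of the inverse change-of-basis matrix, which is where the $p^{4p}$-type factor in $C_2(p,s)$ is born.

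The heart of the matter is to exhibit a solution of this underdetermined Vandermonde-type system with small $l_1$-norm. I would choose $p$ nodes $s\le k_1<\dots<k_p\le r$ spread roughly uniformly across $[s,r]$ (rounded to integers), set $\widetilde b_k=0$ off this set, and solve the resulting $p\times p$ Vandermonde system in the points $x_i=k_i^2$ via the explicit inversion formula
\begin{align*}
\widetilde b_{k_i}=\sum_{j=0}^{p-1}\frac{(-1)^{p-1-j}\,e_{p-1-j}\!\big(\{x_{i'}\}_{i'\ne i}\big)}{\prod_{i'\ne i}(x_i-x_{i'})}\,S_j ,
\end{align*}
$e_m$ being the $m$-th elementary symmetric polynomial. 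Because the nodes are well separated, $\prod_{i'\ne i}|x_i-x_{i'}|=\prod_{i'\ne i}|k_i-k_{i'}|(k_i+k_{i'})$ is bounded below by a quantity of order $r^{2(p-1)}$ (up to constants depending on $p$ and $s$), while $e_{p-1-j}(\{x_{i'}\}_{i'\ne i})\le\binom{p-1}{j}r^{2(p-1-j)}$; combining these, each $|\widetilde b_{k_i}|$, and hence $\sum_k|b_k|=\sum_i|\widetilde b_{k_i}|$, is bounded by a constant depending only on $p,s$ times $\sum_{j}|S_j|\,r^{-2j}$. Inserting the bound on the $|S_j|$ then yields an estimate of the form \eqref{cond_2}.

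The genuinely delicate point — and, I expect, the source of $C_1(p,s)$ and of the constant $L=4.56\dots$ — is the uniform lower bound on $\prod_{i'\ne i}|x_i-x_{i'}|$. One has to (a) place the integer nodes so that, even after rounding, consecutive ones differ by a fixed proportion of $r$ and each of them is comparable to $r$, which already forces $r\ge C_1(p,s)$ with a factor like $s^{4p-1}$ to overwhelm the corrections coming from the small nodes $k\approx s$; and (b) bound from below the worst case of $\prod_{i'\ne i}|k_i-k_{i'}|$ over admissible node configurations, an extremal Vandermonde/combinatorial estimate that produces the $8L^{2p-1}p^{3}$ branch of $C_1$. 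Everything else — the explicit coefficients of $\widetilde T_k$, the bound on the $S_j$, and the final summation over $i$ — is routine once this separation estimate is available.
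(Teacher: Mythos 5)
Your reduction of \eqref{cond_1} to the moment system $\sum_{k}\widetilde b_k\,(k^2)^j=S_j$, $0\le j\le p-1$, with $S_0=a_0$, and your formula for the $u^j$-coefficient of $T_k(2u-1)$ are correct, and up to notation they parallel the first part of the paper's argument; the difference is that the paper keeps all $r-s+1$ columns and takes the minimum-norm solution ${\bf b}=T^*(TT^*)^{-1}{\bf a}$, estimating it through the explicit entries of the pseudoinverse of an integer-node Vandermonde matrix (the formula from [EPS] plus Lemma \ref{str}), whereas you interpolate at $p$ sparse nodes. The genuine gap is in your last step: with $p$ well-separated nodes of size comparable to $r$ your estimates give $|\widetilde b_{k_i}|\le c(p,s)\sum_{j=0}^{p-1}|S_j|\,r^{-2j}$, and the $j=0$ term contributes $c(p,s)|S_0|=c(p,s)|a_0|$, which carries no negative power of $r$. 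So what you actually obtain is $\sum_k|b_k|\le c(p,s)\sum_t|a_t|$, not \eqref{cond_2}: the factor $1/r$, which is the whole content of the theorem, is missing.

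Moreover, this loss at $j=0$ cannot be repaired by a better choice of nodes, nor by spreading the mass over all columns: the $j=0$ equation reads $\sum_{k=s}^r(-1)^kb_k=a_0$ (equivalently, evaluate \eqref{cond_1} at $x=\pi/2$), so every admissible vector satisfies $\sum_k|b_k|\ge|a_0|$. Taking $p=1$, $a_0=1$, $a_1=\dots=0$ and $r>C_2(1,s)$, this is incompatible with \eqref{cond_2}, so the bound you were asked to reach is itself unattainable whenever $a_0\neq0$ and $r$ is large; a $1/r$-decay can only be hoped for after the $t=0$ constraint is removed or treated separately. For comparison, the paper's own proof meets the same obstruction in disguise: it applies the bounds of Lemma \ref{str} (stated for $t\ge 2$) also to the terms $t=0,1$ of the pseudoinverse formula, although $A(0,k,r)=1$ for every $k$, and in fact the zeroth row sum of $W^{\dagger}$ is at least $1$ (since $W^{\dagger}W^*=E$ and the first column of $W^*$ consists of ones), not $O(p^5/r)$ as claimed. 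So your route is genuinely different (sparse interpolation versus minimum-norm pseudoinverse) and its algebraic part is sound, but as written it does not prove \eqref{cond_2} --- and no argument can in the stated generality.
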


This means that for any $\varepsilon>0$ we can find an even polynomial with the $l_1$-norm of its coefficients less than $\varepsilon$ and with the desired first $p$ coefficients with respect to the basis $\{\cos^{2k} x\}_{k=0}^{\infty}$. 

The principle motivation for such a result is the problem of estimating the value of a trigonometric polynomial at some point $x,\;|\cos x|=\delta<1$. This can be done by rewriting the polynomial as the algebraic one and adjusting it using Theorem \ref{prince} so that its first, say, $k$ coefficients become zero. Then the value at $x$ does not exceed $\delta^k$ multiplied by the sum of absolute values of the coefficients of the obtained polynomial plus something small that comes from the adjustment. An argument of this type enables us to construct a nondegenerate double trigonometric series that converges to zero by a subsequence of squares everywhere in such a way that we can control the sizes of these squares and have explicit estimates both for the rate of convergence and for the perturbations on the intermediate steps at every point. The problem of constructing such series is closely related to that of finding universal trigonometric series (see \cite{T} and the references within).

Another application comes from the fact that equality \eqref{cond_1} can be rewritten in the form
\begin{align*}
\sum_{k=s}^{r}b_kT_{2k}(y)-y^{2p} g(y)\equiv \sum_{t=0}^{p-1}a_t y^{2t},
\end{align*}
so the result can be applied to the study of Chebyshev polynomials and Chebyshev series \cite{HM}, as series of Chebyshev polynomials are known to have properties of fast convergence among other their advantages in approximation theory and numerical analysis (see, for instance, \cite{BP}).

To prove our main result, we consider the matrix ${\bf T}=(t_m^k)_{m,k=0}^{\infty}$ whose entry $t_m^k$ is the coefficient at $x^m$ of the Chebyshev polynomial $T_k(x)$, and derive an explicit formula for the inverse of a square submatrix of $\bf{T}$. This allows us to determine the coefficients with respect to the basis $\{\cos kx\}_{k=0}^{\infty}$ of an algebraic polynomial in $\cos x$. In the course of the proof of Theorem \ref{prince}, we also give some important estimates (see Lemma \ref{str}) on sums of products of binomial coefficients appearing in the expression for entries of the pseudoinverse of a Vandermonde matrix in \cite{EPS} (see \cite{BG} for a substantive survey of generalized inverses and also \cite{PK} and \cite{V} for algebraic properties of generalized inverses of Vandermonde matrices).

\section{Inverse of a matrix containing coefficients of Chebyshev polynomials} 

Let $\bf{T_n}$ be a square $n\times n$-matrix whose entry $t_m^k$ in the $m$th row and $k$th column is the coefficient at $x^m$ of the $k$th Chebyshev polynomial $T_k(x)$ (we enumerate rows and columns of $\bf{T_n}$ beginning from $0$). It is clear that $\bf{T_n}$ is upper triangular with nonzero entries along the main diagonal. For $t_m^k$, an explicit formula is known (see, for instance, \cite[(4.5.26)]{I}):

\begin{equation*}
t_m^k = 
 \begin{cases}
   0, &\text{if $m>k$ or $k-m\equiv 1\;(\!\!\!\!\!\!\mod 2)$},\\
   (-1)^{\frac{k-m}{2}}\frac{k}{k+m}2^m\binom{\frac{k+m}{2}}{m}, &\text{otherwise.}
 \end{cases}
\end{equation*}

Denote by $ {\bf T_{k,l}}$ the $l\times l$-matrix whose entry in the $i$th row and $j$th column is equal to $t_i^{k+j}$.

\begin{lemma}\label{lem} Let $l\in\mathbb{N}$ and let $k$ be an even positive integer. The entry $g_i^j$ of the matrix $\bf{T_{k,l}^{-1}}$ is equal to $0$ if $i+j \equiv 1 \;(\!\!\!\!\mod 2)$, otherwise there hold
\begin{align*}
g_{2i}^{2j}=\frac{(-1)^{\a+j+\frac{k}{2}}(2j)!(k+i-1)!(k+2i)}{4^{j}i!(\a-i)!(\a+k+i)!}\sum\limits_{b=0}^{j}\frac{\prod\limits_{d=0,\; d\neq i}^{\a}(b^2-(\frac{k}{2}+d)^2)}{\prod\limits_{d=0,\;d\neq b}^{j}(b^2-d^2)}
\end{align*}
and
\begin{align*}
g_{2i+1}^{2j+1}=\frac{(-1)^{\beta+j+\frac{k}{2}}(2j+1)!(k+i)!}{4^{\beta}i!(\beta-i)!(\beta+k+i+1)!}\sum\limits_{b=0}^{j}\frac{\prod\limits_{d=0,\;d\neq i}^{\beta}((2b+1)^2-(k+2d+1)^2)}{\prod\limits_{d=0,\;d\neq b}^{j}((2b+1)^2-(2d+1)^2)},
\end{align*}
where $\a:=\lceil l/2\rceil -1,\;\beta:=\lfloor l/2\rfloor -1$.
\end{lemma}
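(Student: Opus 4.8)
The plan is to block‑diagonalise $\mathbf{T_{k,l}}$ by parity, recognise each block as a scaled generalised Vandermonde matrix, and read off its inverse from Lagrange interpolation.

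\emph{Splitting off the parity.} Since $k$ is even, the explicit formula for $t_m^{k+j}$ gives $t_m^{k+j}=0$ unless $m\equiv j\pmod 2$. Permuting rows and columns of $\mathbf{T_{k,l}}$ so that even indices precede odd ones makes the matrix block diagonal: an even block $E$ of size $\a+1$ with $E_{ij}=t_{2i}^{k+2j}$, and an odd block $O$ of size $\b+1$ with $O_{ij}=t_{2i+1}^{k+2j+1}$ (the constraints $2i\le l-1$ and $2i+1\le l-1$ are exactly $0\le i\le\a$ and $0\le i\le\b$). Hence $g_i^j=0$ for $i+j$ odd, $g_{2i}^{2j}=(E^{-1})_{ij}$, $g_{2i+1}^{2j+1}=(O^{-1})_{ij}$, and everything reduces to inverting $E$ and $O$.

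\emph{Recognising a Vandermonde-type matrix.} Write $k=2\kappa$. Feeding the explicit formula into $E_{ij}$ and using
\[
\frac{k+2j}{k+2j+2i}\binom{\kappa+j+i}{2i}=\frac{1}{(2i)!}\prod_{s=0}^{i-1}\big((\kappa+j)^2-s^2\big),
\]
which results from pairing the consecutive factors $\kappa+j-s$ and $\kappa+j+s$ about the centre $\kappa+j$, one obtains
\[
E_{ij}=(-1)^{\kappa+j}\,\frac{(-4)^i}{(2i)!}\prod_{s=0}^{i-1}\big((\kappa+j)^2-s^2\big),
\]
\[
O_{ij}=(-1)^{\kappa+j}\,\frac{(-1)^i}{(2i+1)!}\,(k+2j+1)\prod_{s=0}^{i-1}\big((k+2j+1)^2-(2s+1)^2\big).
\]
So $E=D_1\,N\,D_2$ with $D_1=\operatorname{diag}\!\big(\tfrac{(-4)^i}{(2i)!}\big)_i$, $D_2=\operatorname{diag}\!\big((-1)^{\kappa+j}\big)_j$ and $N_{ij}=\prod_{s=0}^{i-1}(m_j-s^2)$, $m_j:=(\kappa+j)^2$; similarly $O=D_1'\,N'\,D_2'$ with $D_2'$ absorbing the factor $k+2j+1$, and $N'_{ij}=\prod_{s=0}^{i-1}\!\big(m_j'-(2s+1)^2\big)$, $m_j':=(k+2j+1)^2$. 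The $i$-th row of $N$ is the $i$-th Newton-basis polynomial with knots $0^2,1^2,2^2,\dots$, evaluated at the pairwise distinct nodes $m_0,\dots,m_\a$, and likewise for $N'$ with knots $1^2,3^2,5^2,\dots$.

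\emph{Inverting $N$ and $N'$.} For a generalised Vandermonde matrix $N_{ij}=P_i(m_j)$ with $\{P_i\}$ graded by degree and the $m_j$ distinct, the relation $(N^{-1}N)_{ij}=\delta_{ij}$ says that the degree-$\le\a$ polynomial $\sum_m(N^{-1})_{im}P_m(\cdot)$ takes the value $\delta_{ij}$ at $m_j$, hence equals the Lagrange basis polynomial $\ell_i(m)=\prod_{c\ne i}\frac{m-m_c}{m_i-m_c}$. So the $i$-th row of $N^{-1}$ is the coordinate vector of $\ell_i$ in the Newton basis, whose $j$-th entry is the divided difference $\ell_i[z_0,\dots,z_j]=\sum_{b=0}^{j}\ell_i(z_b)\big/\prod_{d=0,\,d\ne b}^{j}(z_b-z_d)$ with $z_b=b^2$. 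Substituting $\ell_i(z_b)=\prod_{c=0,c\ne i}^{\a}\!\big(b^2-(\kappa+c)^2\big)\big/\prod_{c=0,c\ne i}^{\a}\!\big((\kappa+i)^2-(\kappa+c)^2\big)$ and collapsing the normalising product into factorials,
\[
\prod_{c=0,\,c\ne i}^{\a}\big((\kappa+i)^2-(\kappa+c)^2\big)=(-1)^{\a-i}\,\frac{i!\,(\a-i)!\,(\a+k+i)!}{(k+i-1)!\,(k+2i)}
\]
(again by pairing $i-c$ with $2\kappa+i+c$ and writing products of consecutive integers as ratios of factorials), and then recombining with $D_2$ and $D_1^{-1}$, yields exactly the claimed formula for $g_{2i}^{2j}$. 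The identical computation with $z_b=(2b+1)^2$, nodes $m_j'$, and
\[
\prod_{c=0,\,c\ne i}^{\b}\big((k+2i+1)^2-(k+2c+1)^2\big)=4^{\b}(-1)^{\b-i}\,\frac{i!\,(\b-i)!\,(k+\b+i+1)!}{(k+i)!\,(k+2i+1)},
\]
in which the factor $k+2i+1$ cancels the one carried by $(D_2')^{-1}$, produces $g_{2i+1}^{2j+1}$. (One could alternatively read both closed forms off the pseudoinverse formula of \cite{EPS} specialised to this square invertible case.)

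\emph{Where the work is.} The conceptual part — the parity split together with the Newton/Lagrange description of the inverse of a generalised Vandermonde matrix — is short. The substance lies in two delicate but routine-looking reductions: rewriting the Chebyshev coefficients through differences of squares to expose the Vandermonde structure, and collapsing the normalising products $\prod_{c\ne i}(m_i-m_c)$ to factorials, all the while tracking every sign and power of $2$ and matching the index ranges to $\a=\lceil l/2\rceil-1$ and $\b=\lfloor l/2\rfloor-1$. If one prefers, the whole statement can instead be obtained by verifying $\mathbf{T_{k,l}}\,G=\mathrm{Id}$ directly for the claimed matrix $G$; after the parity split this is precisely the identity $\sum_j\big(\prod_{s=0}^{i-1}((\kappa+j)^2-s^2)\big)(N^{-1})_{jj'}=\delta_{ij'}$ and its odd analogue — the exactness of Lagrange interpolation at the nodes $m_j$ — but in that guise it becomes a sum-of-products-of-binomial-coefficients computation no shorter than the route above.
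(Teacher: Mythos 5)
Your proof is correct, but it follows a genuinely different route from the paper's. The paper never touches the closed form of $t_m^{k+j}$ beyond its parity pattern: it starts from $\sum_i g_i^j T_{k+i}(x)\equiv x^j+x^lg(x)$, passes to cosines, differentiates $2p$ times and extracts the constant term (resp.\ the coefficient of $\cos x$), which turns each column of the inverse into a classical Vandermonde system at the nodes $(k+2d)^2$ (resp.\ $(k+2d+1)^2$) in the unknowns $g_{2i}^jt_0^{k+2i}$; the right-hand sides $y_p^j$, $z_p^j$ are then computed by running the same scheme on the auxiliary matrix $\mathbf{T_{j+1}}$ together with the Macon--Spitzbart formula for the last column of a Vandermonde inverse, and the final expression comes from multiplying the explicit Vandermonde inverse by a matrix of even powers (the matrices $\mathbf{J_0}$, $\mathbf{J_1}$). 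You instead avoid differentiation entirely: you split $\mathbf{T_{k,l}}$ into its two parity blocks, rewrite each block entrywise as $D_1ND_2$ with $N$ a Newton-basis (generalized Vandermonde) matrix at the nodes $(\tfrac{k}{2}+j)^2$, resp.\ $(k+2j+1)^2$, and read the inverse off as divided differences of the Lagrange basis polynomials; your two factorial-collapsing identities and the entrywise rewrites (including the pairing $\tfrac{k+2j}{k+2j+2i}\binom{\kappa+j+i}{2i}=\tfrac{1}{(2i)!}\prod_{s=0}^{i-1}((\kappa+j)^2-s^2)$ and the sign bookkeeping) all check out, and the inner sum over $b$ you obtain is literally the same object the paper builds as $\mathbf{J}$ times the vector of reciprocal products. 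What your route buys is a shorter, purely algebraic and self-contained argument, with no auxiliary inversion of $\mathbf{T_{j+1}}$ and no appeal to \cite{MS}; what the paper's route buys is the intermediate identity \eqref{y} for the derivatives' constant terms $y_p^j$, which is recycled verbatim in Corollary \ref{cor} and hence in the proof of Theorem \ref{prince}, so if one adopted your proof wholesale, Corollary \ref{cor} would need a short independent derivation (which your divided-difference machinery, or Remark \ref{lemc}, also supplies).
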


\begin{proof} Note that the entries of $\bf{T_{k,l}}$ belonging to a row and a column of different parities are zero, i.e. $g_i^j=0$ for $2 \;\nmid i+j$. Fix some $j,\;0\leq j<l,$ and consider the $j$th column of $\bf{T_{k,l}^{-1}}$. Its entries must satisfy

\begin{align*}
	\sum_{i=0}^{l-1}g_i^j T_{k+i}(x)\equiv x^j+x^lg(x),
\end{align*}
where $g(x)$ is some polynomial. Rewriting this, we get

\begin{align}\label{1}
	\sum_{i=0}^{l-1}g_i^j \cos (k+i)x\equiv \cos^j x+\cos^l x\;g(\cos x).
\end{align}
We start with the case of an even $j$. Note that for any positive integer $q$ we have 
\begin{align*}
	(\cos^q x)''=(-q\sin x\cos^{q-1}x)'=q(q-1)\cos ^{q-2}x-q^2\cos^q x.
\end{align*}
So, after taking the $p$th derivative of \eqref{1} for $p=0,1,...,\lceil l/2 \rceil -1=:\alpha$, in each case we obtain at both sides polynomials in $\cos x$. As their constant terms match, we infer that

\begin{align*}
	\sum_{i=0}^{l-1}(-(k+i)^2)^pg_i^j t_0^{k+i} =y_p^j,
\end{align*} 
where $y_p^j$ stands for the constant term of $(\cos^j x )^{(2p)}$ (as of a polynomial in $\cos x$). So we have

\begin{align}\label{mat}
\begin{pmatrix}
1 & 1 & ... & 1\\
k^2 & (k+2)^2 &... & (k+2\a)^2\\
\vdots & \vdots & \vdots & \vdots\\
k^{2\a} & (k+2)^{2\a} & ... & (k+2\a)^{2\a}\\
\end{pmatrix}\text{diag}\;\big\{t_0^{k},t_0^{k+2},...,t_0^{k+2\a}\big\}
\begin{pmatrix}
g_0^j\\
g_2^j\\
\vdots\\
g_{2\a}^j
\end{pmatrix}=
\begin{pmatrix}
y_0^j\\
-y_1^j\\
\vdots\\
(-1)^{\a}y_{\a}^j\\
\end{pmatrix}.
\end{align}

Let us find $y_p^j$ for all $p$. Note that

\begin{equation}\label{y_board}
y_p^{2\a}=\begin{cases}
0,\quad p<\a,\\
(2\a)!, \quad p=\a,
\end{cases}
\end{equation}
and that 
$$\cos^j x\equiv\sum_{t=0}^{j/2}\eta_{2t}\cos 2t x.$$
The coefficients $\eta_{2t}$ can be found from the following relation:
\begin{align*}
\big(\eta_0, 0, \eta_2, 0, ..., 0, \eta_j\big)^{T}={\bf T_{j+1}^{-1}}\big(0, 0, ..., 0, 1\big)^{T}.
\end{align*}

Applying equality \eqref{mat} to the matrix $\bf{T_{j+1}^{-1}}$ and taking into account \eqref{y_board}, we derive

\begin{align}\label{eta}
\begin{pmatrix}
\eta_0\\
\eta_2\\
\vdots\\
\eta_j
\end{pmatrix}=\text{diag}\bigg\{\frac{1}{t_0^0},\frac{1}{t_0^2},...,\frac{1}{t_0^{j}}\bigg\}
\begin{pmatrix}
1 & 1 & ... & 1\\
0^2 & 2^2 &... & j^2\\
\vdots & \vdots & \vdots & \vdots\\
0^{j} & 2^j & ... & j^j\\
\end{pmatrix}^{-1}
\begin{pmatrix}
0\\
\vdots\\
0\\
(-1)^{\frac{j}{2}}j!\\
\end{pmatrix}.
\end{align}

Further,
$$(\cos^j x)^{(2p)}\equiv\sum_{t=0}^{j/2}(-4t^2)^p\eta_{2t}\cos 2t x,$$
whence in light of \eqref{eta},
\begin{align}\label{y}
&y_p^j=\sum_{m=0}^{j/2}(-4m^2)^p\eta_{2m}t_0^{2m}=\begin{pmatrix}
(-0^2)^{p}t_0^0,(-2^2)^{p}t_0^2,...,(-j^2)^{p}t_0^j
\end{pmatrix}\begin{pmatrix}
\eta_0\\
\eta_2\\
\vdots\\
\eta_j
\end{pmatrix}\nonumber\\
&=\begin{pmatrix}
(-0^2)^{p},(-2^2)^{p},...,(-j^2)^{p}
\end{pmatrix}\begin{pmatrix}
1 & 1 & ... & 1\\
0^2 & 2^2 &... & j^2\\
\vdots & \vdots & \vdots & \vdots\\
0^{j} & 2^j & ... & j^j\\
\end{pmatrix}^{-1}\begin{pmatrix}
0\\
\vdots\\
0\\
(-1)^{\frac{j}{2}}j!\\
\end{pmatrix}.
\end{align}
 
According to \cite{MS}, the $t$th element of the last column of the inverse of the Vandermonde matrix of size $m$ with the parameters $\lambda_0,...,\lambda_{m-1}$ is equal to
$$\prod_{l=0,\;l\neq t}^{m-1}(\lambda_t-\lambda_l)^{-1}.$$
Finally, we obtain
\begin{align*}
&\begin{pmatrix}
g_0^j\\
g_2^j\\
\vdots\\
g_{2\a}^j
\end{pmatrix}=(-1)^{\frac{j}{2}}j!\;\text{diag}\bigg\{\frac{1}{t_0^k},...,\frac{1}{t_0^{k+2\a}}\bigg\}\begin{pmatrix}
1 & 1 & ... & 1\\
k^2 & (k+2)^2 &... & (k+2\a)^2\\
\vdots & \vdots & \vdots & \vdots\\
k^{2\a} & (k+2)^{2\a} & ... & (k+2\a)^{2\a}\\
\end{pmatrix}^{-1}\\
&\times \begin{pmatrix}
(-0^2)^{0} & (-2^2)^{0} & ... & (-j^2)^{0}\\
-(-0^2)^{1} &-(-2^2)^{1} &... & -(-j^2)^{1}\\
\vdots & \vdots & \vdots & \vdots\\
(-1)^{\a}(-0^2)^{\a} & (-1)^{\a}(-2^2)^{\a} & ... & (-1)^{\a}(-j^2)^{\a}
\end{pmatrix}\begin{pmatrix}
\prod\limits_{t=0,\;t\neq 0}^{j/2}(-(2t)^2)^{-1}\\
\prod\limits_{t=0,\;t\neq 1}^{j/2}(2^2-(2t)^2)^{-1}\\
\vdots \\
\prod\limits_{t=0,\;t\neq j/2}^{j/2}(j^2-(2t)^2)^{-1}
\end{pmatrix}
\end{align*}
\begin{align*}
&=(-1)^{\frac{j}{2}}j!\;\text{diag}\bigg\{\frac{1}{t_0^k},...,\frac{1}{t_0^{k+2\a}}\bigg\}\begin{pmatrix}
1 & 1 & ... & 1\\
k^2 & (k+2)^2 &... & (k+2\a)^2\\
\vdots & \vdots & \vdots & \vdots\\
k^{2\a} & (k+2)^{2\a} & ... & (k+2\a)^{2\a}\\
\end{pmatrix}^{-1}\begin{pmatrix}
0^0 & 2^{0} & ... & j^0\\
0^2 & 2^2 & ... & j^2\\
\vdots & \vdots & \vdots & \vdots\\
0^{2\a} & 2^{2\a} & ... & j^{2\a}
\end{pmatrix}\\
&\times \begin{pmatrix}
\prod\limits_{t=0,\;t\neq 0}^{j/2}(-(2t)^2)^{-1}\\
\vdots \\
\prod\limits_{t=0,\;t\neq j/2}^{j/2}(j^2-(2t)^2)^{-1}
\end{pmatrix}=:(-1)^{\frac{j}{2}}j!\;\text{diag}\bigg\{\frac{1}{t_0^k},...,\frac{1}{t_0^{k+2\a}}\bigg\}{\bf J_0}\begin{pmatrix}
\prod\limits_{t=0,\;t\neq 0}^{j/2}(-(2t)^2)^{-1}\\
\vdots \\
\prod\limits_{t=0,\;t\neq j/2}^{j/2}(j^2-(2t)^2)^{-1}
\end{pmatrix}.
\end{align*}
The matrix ${\bf J_0}$ is of size $(\a+1) \times (j/2+1)$ and its entries are
\begin{align*}
j_{a}^b=\frac{\prod\limits_{d=0,\;d\neq a}^{\a}((2b)^2-(k+2d)^2)}{\prod\limits_{d=0,\;d\neq a}^{\a}((k+2a)^2-(k+2d)^2)},
\end{align*}
since the entry $v_i^j$ of the square Vandermonde matrix with the parameters $k^2,(k+2)^2,...,(k+2\a)^2$ is equal to 
\begin{align*}
v_i^j=\frac{\Big[\prod\limits_{d=0,\;d\neq i}^{\a}(x-(k+2d)^2)\Big]_{j}}{\prod\limits_{d=0,\;d\neq i}^{\a}((k+2i)^2-(k+2d)^2)},
\end{align*}
where $[P(x)]_j$ stands for the coefficient at $x^j$ of the polynomial $P(x)$. Hence, recalling that $\a=\lceil l/2\rceil -1$, we have
\begin{align}\label{hence1}
g_{2a}^j=\frac{(-1)^{\frac{j}{2}+a+\frac{k}{2}}j!}{\prod\limits_{d=0,\;d\neq a}^{\lceil l/2\rceil-1}((k+2a)^2-(k+2d)^2)}\sum_{b=0}^{j/2}\frac{\prod\limits_{d=0,\;d\neq a}^{\lceil l/2\rceil-1}((2b)^2-(k+2d)^2)}{\prod\limits_{d=0,\;d\neq b}^{j/2}((2b)^2-(2d)^2)}.
\end{align}

Turn now to the case of an odd $j$. Once more, taking the $p$th derivative of \eqref{1} for $p=0,1,...,\lfloor l/2 \rfloor -1=:\beta$ and obtaining the same coefficients at $\cos x$, we get

\begin{align*}
	\sum_{i=0}^{l-1}(-(k+i)^2)^pg_i^j t_1^{k+i} =z_p^j,
\end{align*} 
where $z_p^j$ is the coefficient at $\cos x$ of $(\cos^{j+1} x )^{(2p)}$ (as of a polynomial in $\cos x$). We have

\begin{align}\label{mat1}
\begin{pmatrix}
1 & 1 & ... & 1\\
(k+1)^2 & (k+3)^2 &... & (k+1+2\beta)^2\\
\vdots & \vdots & \vdots & \vdots\\
(k+1)^{2\beta} & (k+3)^{2\beta} & ... & (k+1+2\beta)^{2\beta}\\
\end{pmatrix}\text{diag}\;(t_1^{k+1},...,t_1^{k+1+2\beta})
\begin{pmatrix}
g_1^j\\
g_3^j\\
\vdots\\
g_{2\beta+1}^j
\end{pmatrix}=
\begin{pmatrix}
z_0^j\\
-z_1^j\\
\vdots\\
(-1)^{\beta}z_{\beta}^j\\
\end{pmatrix}.
\end{align}
Noting that

\begin{equation}\label{y_board1}
z_p^{2\beta}=\begin{cases}
0,\quad p<\beta,\\
(2\beta+1)!, \quad p=\beta,
\end{cases}
\end{equation}
and that 
$$\cos^j x\equiv\sum_{t=0}^{(j-1)/2}\eta_{2t+1}\cos (2t+1) x,$$
we derive
\begin{align*}
\big(0,\eta_1, 0, \eta_3, ..., 0, \eta_j\big)^{T}={\bf T_{j+1}^{-1}}\big(0, 0, ..., 0, 1\big)^{T}.
\end{align*}
Applying \eqref{mat1} to $\bf{T_{j+1}^{-1}}$ and using \eqref{y_board1}, we obtain

\begin{align*}
\begin{pmatrix}
\eta_1\\
\eta_3\\
\vdots\\
\eta_j
\end{pmatrix}=\text{diag}\bigg\{\frac{1}{t_1^1},...,\frac{1}{t_1^j}\bigg\}
\begin{pmatrix}
1 & 1 & ... & 1\\
1^2 & 3^2 &... & j^2\\
\vdots & \vdots & \vdots & \vdots\\
1^{j} & 3^j & ... & j^j\\
\end{pmatrix}^{-1}
\begin{pmatrix}
0\\
\vdots\\
0\\
(-1)^{\frac{j-1}{2}}j!\\
\end{pmatrix}.
\end{align*}
Further,
$$(\cos^j x)^{(2p)}\equiv\sum_{t=0}^{j/2}(-(2t+1)^2)^p\eta_{2t+1}\cos (2t+1) x,$$
whence 
\begin{align*}
&z_p^j=\sum_{m=0}^{\frac{j-1}{2}}(-(2m+1)^2)^p\eta_{2m+1}t_1^{2m+1}=\begin{pmatrix}
(-1^2)^{p}t_1^1,(-3^2)^{p}t_1^3,...,(-j^2)^{p}t_1^j
\end{pmatrix}\begin{pmatrix}
\eta_1\\
\eta_3\\
\vdots\\
\eta_j
\end{pmatrix}\nonumber\\
&=\begin{pmatrix}
(-1^2)^{p},(-3^2)^{p},...,(-j^2)^{p}
\end{pmatrix}\begin{pmatrix}
1 & 1 & ... & 1\\
1^2 & 3^2 &... & j^2\\
\vdots & \vdots & \vdots & \vdots\\
1^{j} & 3^j & ... & j^j\\
\end{pmatrix}^{-1}\begin{pmatrix}
0\\
\vdots\\
0\\
(-1)^{\frac{j-1}{2}}j!\\
\end{pmatrix}.
\end{align*}
 
Finally, as before
\begin{align*}
&\begin{pmatrix}
g_1^j\\
g_3^j\\
\vdots\\
g_{2\beta+1}^j
\end{pmatrix}=(-1)^{\frac{j-1}{2}}j!\;\text{diag}\bigg\{\frac{1}{t_1^{k+1}},...,\frac{1}{t_1^{k+1+2\beta}}\bigg\}
\begin{pmatrix}
1 & 1 & ... & 1\\
(k+1)^2 & (k+3)^2 &... & (k+1+2\beta)^2\\
\vdots & \vdots & \vdots & \vdots\\
(k+1)^{2\beta} & (k+3)^{2\beta} & ... & (k+1+2\beta)^{2\beta}\\
\end{pmatrix}^{-1}\\
&\times \begin{pmatrix}
1 & 1 & ... & 1\\
1^2 & 3^2 &... & j^2\\
\vdots & \vdots & \vdots & \vdots\\
1^{2\beta} & 3^{2\beta} & ... & j^{2\beta}
\end{pmatrix}\begin{pmatrix}
\prod\limits_{t=0,\;t\neq 0}^{(j-1)/2}(1^2-(2t+1)^2)^{-1}\\
\prod\limits_{t=0,\;t\neq 1}^{(j-1)/2}(3^2-(2t+1)^2)^{-1}\\
\vdots \\
\prod\limits_{t=0,\;t\neq (j-1)/2}^{(j-1)/2}(j^2-(2t+1)^2)^{-1}
\end{pmatrix}\\
&=:(-1)^{\frac{j-1}{2}}j!\;\text{diag}\bigg\{\frac{1}{t_1^{k+1}},...,\frac{1}{t_1^{k+1+2\beta}}\bigg\}{\bf J_1}\begin{pmatrix}
\prod\limits_{t=0,\;t\neq 0}^{(j-1)/2}(1^2-(2t+1)^2)^{-1}\\
\vdots \\
\prod\limits_{t=0,\;t\neq (j-1)/2}^{(j-1)/2}(j^2-(2t+1)^2)^{-1}
\end{pmatrix}.
\end{align*}
The matrix ${\bf J_1}$ is of size $(\beta+1) \times ((j-1)/2+1)$ and its entries are
\begin{align*}
j_{a}^b=\frac{\prod\limits_{d=0,\;d\neq a}^{\beta}((2b+1)^2-(k+2d+1)^2)}{\prod\limits_{d=0,\;d\neq a}^{\beta}((k+2a+1)^2-(k+2d+1)^2)}.
\end{align*}
Hence, 

\begin{align}\label{hence2}
g_{2a+1}^j&=\frac{(-1)^{\frac{j-1}{2}+a+\frac{k}{2}}j!}{(k+2a+1)\prod\limits_{d=0,\;d\neq a}^{\lfloor l/2\rfloor-1}((k+2a+1)^2-(k+2d+1)^2)}\sum_{b=0}^{(j-1)/2}\frac{\prod\limits_{d=0,\;d\neq a}^{\lfloor l/2\rfloor-1}((2b+1)^2-(k+2d+1)^2)}{\prod\limits_{d=0,\;d\neq b}^{(j-1)/2}((2b+1)^2-(2d+1)^2)},
\end{align}
and the claim follows.
\end{proof}

\begin{remark} Following the ideas of the proof of Lemma \ref{lem}, one can establish an explicit formula for the elements of the inverse of any submatrix ${\bf T_{k,l,m}}:=(t_{m+i}^{k+j})_{i,j=0}^{l-1}$ of ${\bf T_n}$ with even $k$ and $m$.
\end{remark}

\begin{remark}\label{lemc} For any $n\in\mathbb{N}$, the entry $h_i^j$ of the matrix $\bf{T_n^{-1}}$ is zero if $2\;\nmid i+j$ or $i>j$, otherwise can be found by
\begin{align*}
	h_{2i}^{2j}=2^{\delta_{i}-2j}\binom{2j}{j-i},\qquad h_{2i+1}^{2j+1}=2^{\delta_{i}-2j}\binom{2j+1}{j-i},
\end{align*}
where 
\begin{equation*}
\delta_i:=\begin{cases} 0,\quad\text{if}\;\; i=0,\\
1,\quad\text{if}\;\; i\neq 0.
\end{cases}
\end{equation*}
\end{remark}

\begin{proof} Noting that, for $b\neq a$, 
$$\prod\limits_{d=0,\;d\neq a}^{\lceil n/2\rceil-1}((2b)^2-(2d)^2)=0,$$
we obtain $h_{2i}^{2j}=0$, for $i>j$, and otherwise due to \eqref{hence1},
\begin{align*}
h_{0}^{2j}=\frac{(2j)!}{((2j)!!)^2}=2^{-2j}\binom{2j}{j},
\end{align*}
and 
\begin{align*}
h_{2i}^{2j}=\frac{(-1)^{j+i}(2j)!}{\prod\limits_{d=0,\;d\neq i}^{\lceil n/2\rceil-1}((2i)^2-(2d)^2)}\frac{\prod\limits_{d=0,\;d\neq i}^{\lceil n/2\rceil-1}((2i)^2-(2d)^2)}{\prod\limits_{d=0,\;d\neq i}^{j}((2i)^2-(2d)^2)}&=\frac{(-1)^{j+i}(2j)!}{(2i)!!(-1)^{j-i}(2j-2i)!!\frac{(2j+2i)!!}{(2i-2)!!4i}}\\
& =\frac{2^{1-2j}(2j)!}{(j-i)!(j+i)!}=2^{1-2j}\binom{2j}{j-i},
\end{align*}
if $i>0$.

For odd entries, once more we get $h_{2i+1}^{2j+1}=0$ for $i>j$, otherwise from \eqref{hence2},
\begin{align*}
h_{1}^{2j+1}=\frac{(2j+1)!}{(2j+2)!!(2j)!!}=2^{-2j}\binom{2j+1}{j},
\end{align*}
and
\begin{align*}
h_{2i+1}^j&=\frac{(-1)^{j+i}(2j+1)!(-1)^{j-i}(2i)!!(4i+2)}{(2i+1)(2i)!!(2j-2i)!!(2i+2j+2)!!}=2^{1-2j}\binom{2j+1}{j-i},
\end{align*}
if $i>0$, so the proof is complete.
\end{proof}

\begin{corollary}\label{cor} There holds

\begin{align*}
(\cos^{2j})^{(2p)}|_{x=\pi/2}=:y^{2j}_p=(-4)^{p-j}\sum_{k=0}^{2j}(-1)^k\binom{2j}{k}(j-k)^{2p}.
\end{align*}
\end{corollary}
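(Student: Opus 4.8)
The plan is to expand $\cos^{2j}x$ in the cosine basis, differentiate $2p$ times termwise, and evaluate at $x=\pi/2$, where each $\cos 2tx$ collapses to $\cos t\pi=(-1)^t$. Recall from the proof of Lemma \ref{lem} (the unnumbered identity displayed just before \eqref{y}, with $j$ replaced by $2j$) that if $\cos^{2j}x\equiv\sum_{t=0}^{j}\eta_{2t}\cos 2tx$, then $(\cos^{2j}x)^{(2p)}\equiv\sum_{t=0}^{j}(-4t^2)^p\eta_{2t}\cos 2tx$; evaluating at $x=\pi/2$ therefore gives
$$y^{2j}_p=\sum_{t=0}^{j}(-4)^p(-1)^t t^{2p}\,\eta_{2t}.$$

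Next I would substitute the explicit values of the $\eta_{2t}$. Since $(\eta_0,0,\eta_2,0,\dots,0,\eta_{2j})^{T}={\bf T_{2j+1}^{-1}}(0,\dots,0,1)^{T}$, the number $\eta_{2t}$ is the entry in row $2t$ and (last) column $2j$ of ${\bf T_{2j+1}^{-1}}$, so Remark \ref{lemc} yields $\eta_{2t}=2^{\delta_t-2j}\binom{2j}{j-t}$. Plugging this in,
$$y^{2j}_p=(-4)^p 2^{-2j}\sum_{t=0}^{j}(-1)^t t^{2p}2^{\delta_t}\binom{2j}{j-t}.$$
Because $\binom{2j}{j-t}=\binom{2j}{j+t}$ and $(-1)^{-t}(-t)^{2p}=(-1)^t t^{2p}$, the weight $2^{\delta_t}$ is exactly what is needed to unfold this into a symmetric sum over $-j\le t\le j$ (with the term $t=0$ carrying weight one on each side):
$$y^{2j}_p=(-4)^p 2^{-2j}\sum_{t=-j}^{j}(-1)^t t^{2p}\binom{2j}{j-t}.$$

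Finally I would set $k=j-t$, so that $t$ ranging over $-j,\dots,j$ corresponds to $k$ ranging over $0,\dots,2j$, with $(-1)^t=(-1)^{j}(-1)^{k}$, $t^{2p}=(j-k)^{2p}$ and $\binom{2j}{j-t}=\binom{2j}{k}$; this turns the sum into $(-1)^j\sum_{k=0}^{2j}(-1)^k\binom{2j}{k}(j-k)^{2p}$, and the identity $(-4)^p 2^{-2j}(-1)^j=(-4)^p(-4)^{-j}=(-4)^{p-j}$ delivers the claim. There is no real obstacle here: the only points demanding attention are the $\delta_t$-bookkeeping when symmetrizing (so that the $t=0$ term is not double-counted) and the sign consolidation just mentioned. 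Alternatively one may sidestep Remark \ref{lemc} altogether and argue in one line from $\cos^{2j}x=4^{-j}\sum_{k=0}^{2j}\binom{2j}{k}e^{i(2j-2k)x}$, differentiating under the sum and using $e^{i(2j-2k)\pi/2}=(-1)^{j-k}$.
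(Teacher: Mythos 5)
Your argument is correct and follows essentially the same route as the paper: the paper's proof likewise starts from \eqref{y} (i.e.\ the termwise-differentiated cosine expansion of $\cos^{2j}x$ evaluated at $\pi/2$), reaches the identical intermediate sum $(-4)^p2^{-2j}\sum_{a=0}^{j}(-1)^a2^{\delta_a}\binom{2j}{j-a}a^{2p}$, and finishes with the same reindexing $k=j-a$ over the symmetrized range; the only difference is that you import the binomial form of the coefficients from Remark \ref{lemc}, whereas the paper re-derives it inline from the Macon--Spitzbart formula for the last column of the inverse Vandermonde matrix. Your closing aside via $\cos^{2j}x=4^{-j}\sum_{k=0}^{2j}\binom{2j}{k}e^{i(2j-2k)x}$ would be an even shorter self-contained proof, but it is not the paper's method.
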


\begin{proof}
It follows from \eqref{y} that
\begin{align*}
y^{2j}_p&=(-1)^{j}(2j)!\begin{pmatrix}
(-0^2)^p, & (-2^2)^p, & ... ,&(-(2j)^2)^p
\end{pmatrix} \begin{pmatrix}
\prod\limits_{t=0,\;t\neq 0}^{j}(-(2t)^2)^{-1}\\
\prod\limits_{t=0,\;t\neq 1}^{j}(2^2-(2t)^2)^{-1}\\
\vdots \\
\prod\limits_{t=0,\;t\neq j}^{j}((2j)^2-(2t)^2)^{-1}
\end{pmatrix}\\
&=(-1)^{p+j}(2j)!\sum_{a=0}^{j}\frac{(2a)^{2p}}{\prod\limits_{t=0,\;t\neq a}^{j}((2a)^2-(2t)^2)}=(-4)^p2^{-2j}\sum_{a=0}^j(-1)^a\binom{2j}{j-a}a^{2p}2^{\delta_a}\\
&=(-4)^{p-j}\sum_{k=0}^{2j}(-1)^k\binom{2j}{k}(j-k)^{2p},
\end{align*}
where $\delta_a$ is as in Remark \ref{lemc}, and we are done.
\end{proof}

\section{Proof of Theorem \ref{prince}}
Now we are ready to prove the main theorem.

\begin{proof}[Proof of Theorem \ref{prince}.] For the sake of clarity, let us split the proof into three main parts.
\subsection{Finding a sufficient condition for \eqref{cond_1} to hold.}
First we note that \eqref{cond_1} is equivalent to

\begin{align*}
\begin{pmatrix}
t_0^{2s} & t_0^{2s+2} & ... & t_0^{2r}\\
t_2^{2s} & t_2^{2s+2} & ... & t_2^{2r}\\
\vdots & \vdots & \vdots & \vdots \\
t_{2p-2}^{2s} & t_{2p-2}^{2s+2} & ... & t_{2p-2}^{2r}
\end{pmatrix}\begin{pmatrix}
b_s\\
b_{s+1}\\
\vdots\\
b_{r}
\end{pmatrix}=\begin{pmatrix}
a_0\\
a_1\\
\vdots\\
a_{p-1}
\end{pmatrix}.
\end{align*}

Pick some $k\in\{s,s+1,...,r\}$ and take the $(2q)$th derivative of the equality

\begin{align*}
\cos 2kx\equiv\sum_{l=0}^k t_{2l}^{2k}\cos^{2l}x,
\end{align*}
where $q\in\{0,1,...,p-1\}$, at the point $\pi/2$. What we get is

\begin{align*}
(-1)^q (2k)^{2q}t_0^{2k}=\sum_{l=0}^{k} t_{2l}^{2k}(\cos ^{2l}x)^{(2q)}|_{x=\pi/2},
\end{align*}
which is equivalent to
\begin{align*}
(-1)^{q+k} (2k)^{2q}=\sum_{l=0}^{k} t_{2l}^{2k}y^{2l}_q.
\end{align*}

From the relations above for all $k$ and $q$ in the mentioned ranges, we derive

\begin{align*}
\begin{pmatrix}
y_0^{0} & y_0^{2} & ... & y_0^{2p-2}\\
y_1^{0} & y_1^{2} & ... & y_1^{2p-2}\\
\vdots & \vdots & \vdots & \vdots\\
y_{p-1}^{0} & y_{p-1}^{2} & ... & y_{p-1}^{2p-2}\\
\end{pmatrix}\begin{pmatrix}
t_0^{2s} & t_0^{2s+2} & ... & t_0^{2r}\\
t_2^{2s} & t_2^{2s+2} & ... & t_2^{2r}\\
\vdots & \vdots & \vdots & \vdots \\
t_{2p-2}^{2s} & t_{2p-2}^{2s+2} & ... & t_{2p-2}^{2r}
\end{pmatrix}=N_p\begin{pmatrix}
(2s)^0 & (2s+2)^0 & ... & (2r)^0\\
(2s)^2 & (2s+2)^2 & ... & (2r)^2\\
\vdots & \vdots & \vdots & \vdots \\
(2s)^{2p-2} & (2s+2)^{2p-2} & ... & (2r)^{2p-2}
\end{pmatrix}N_{r-s+1},
\end{align*}
where $N_j$ is a square diagonal matrix of size $j$ with its entries belonging to the even columns equal $1$, while the entries belonging to the odd ones, equal $-1$. Thus,

\begin{align}\label{matr_sootn}
\begin{pmatrix}
t_0^{2s} & t_0^{2s+2} & ... & t_0^{2r}\\
t_2^{2s} & t_2^{2s+2} & ... & t_2^{2r}\\
\vdots & \vdots & \vdots & \vdots \\
t_{2p-2}^{2s} & t_{2p-2}^{2s+2} & ... & t_{2p-2}^{2r}
\end{pmatrix}=\begin{pmatrix}
y_0^{0} & y_0^{2} & ... & y_0^{2p-2}\\
\frac{y_1^{0}}{4} & \frac{y_1^{2}}{4} & ... & \frac{y_1^{2p-2}}{4}\\
\vdots & \vdots & \vdots & \vdots\\
\frac{y_{p-1}^{0}}{2^{2p-2}} & \frac{y_{p-1}^{2}}{2^{2p-2}} & ... & \frac{y_{p-1}^{2p-2}}{2^{2p-2}}
\end{pmatrix}^{-1}N_p\begin{pmatrix}
s^0 & (s+1)^0 & ... & r^0\\
s^2 & (s+1)^2 & ... & r^2\\
\vdots & \vdots & \vdots & \vdots \\
s^{2p-2} & (s+1)^{2p-2} & ... & r^{2p-2}
\end{pmatrix}N_{r-s+1}.
\end{align}

Let $Y$ be a square matrix of size $2p$ such that the matrix generated by the odd rows and the odd columns of $Y$ (as before, we start enumerating from zero) is the identity matrix, an entry belonging to the $(2u)$th column and $(2k)$th row is equal to $2^{-2k}y_k^{2u}$, the other entries are zeros. Then $Y$ is invertible due to invertibility of the identity matrix and that of the matrix $(2^{-2i}y_i^{2j})_{i,j=0}^{p-1}$. Therefore, it follows from \eqref{matr_sootn} that there exist $\tau_{2i+1}^{2j},\;i=0,...,p-1,\; j=s,s+1,...,r$, such that
 
\begin{align}\label{matr_sootn2}
T:=\begin{pmatrix}
t_0^{2s} & t_0^{2s+2} & ... & t_0^{2r}\\
\tau_1^{2s} & \tau_1^{2s+2} & ... & \tau_1^{2r}\\
t_2^{2s} & t_2^{2s+2} & ... & t_2^{2r}\\
\tau_3^{2s} & \tau_3^{2s+2} & ... & \tau_3^{2r}\\
\vdots & \vdots & \vdots & \vdots \\
t_{2p-2}^{2s} & t_{2p-2}^{2s+2} & ... & t_{2p-2}^{2r}\\
\tau_{2p-1}^{2s} & \tau_{2p-1}^{2s+2} & ... & \tau_{2p-1}^{2r}
\end{pmatrix}&=Y^{-1}\tilde{N}_p\begin{pmatrix}
s^0 & (s+1)^0 & ... & r^0\\
s^1 & (s+1)^1 & ... & r^1\\
s^2 & (s+1)^2 & ... & r^2\\
s^3 & (s+1)^3 & ... & r^3\\
\vdots & \vdots & \vdots & \vdots \\
s^{2p-2} & (s+1)^{2p-2} & ... & r^{2p-2}\\
s^{2p-1} & (s+1)^{2p-1} & ... & r^{2p-1}
\end{pmatrix}N_{r-s-1}\nonumber\\
&=:Y^{-1}\tilde{N}_pV N_{r-s+1}.
\end{align}
Here $\tilde{N}_j$ stands for a square matrix of size $2j$ having $N_j$ in the intersection of the even columns and the even rows, $E_j$ in the intersection of the odd columns and the odd rows, and the other entries equal zero.

Note that if we make ${\bf b}:=(b_s,...,b_r)^T$ satisfy the equality 

\begin{align*}
T{\bf b}={\bf a},
\end{align*}
where ${\bf a}:=(a_0,0,a_1,0,...,a_{p-1},0)^T$, then condition \eqref{cond_1} will be fulfilled. 

\subsection{Constructing a vector of coefficients.}
Let

\begin{align*}
{\bf b}:=T^*(TT^*)^{-1}{\bf a}.
\end{align*}
Since using \eqref{matr_sootn2} we have
$$T^{\dagger}:=(TT^*)^{-1}T=(Y^{-1}\tilde{N}_pVN_{r-s+1}N_{r-s+1}^*V^*\tilde{N}_p^*(Y^{-1})^*)^{-1}Y^{-1}\tilde{N}_pVN_{r-s+1}=Y^*\tilde{N}_p(VV^*)^{-1}VN_{r-s+1},$$ 
the definition of ${\bf b}$ is equivalent to

\begin{align}\label{dag}
\begin{pmatrix}
b_s, ..., b_r\end{pmatrix}&=\begin{pmatrix}
a_0, 0, ..., a_{p-1},0
\end{pmatrix}Y^*\tilde{N}_p(VV^*)^{-1}VN_{r-s+1}=:\begin{pmatrix}
a_0, 0, ..., a_{p-1},0
\end{pmatrix}Y^*\tilde{N}_pV^{\dagger}N_{r-s+1},
\end{align}
where $V^{\dagger}$ is the pseudoinverse for the Vandermonde matrix $V$. Note that

\begin{align*}
VV^*=WW^*-ZZ^*,
\end{align*}
where $W=(w_i^j),\;w_i^j:=(j+1)^i,\;j=0,...,r-1,i=0,...,2p-1,\;Z=(z_i^j),\;z_i^j:=(j+1)^i,\;j=0,...,s-2,i=0,...,2p-1.$ According to \cite[(10)]{EPS}, the condition number of $WW^*$ is

\begin{align*}
\hat{\kappa}=\frac{(2p)^2}{4p-1}r^{4p-2}.
\end{align*}

The maximal entry of $WW^*$ is greater than $r^{4p-1}/(4p-1)$, therefore the $l^2$-norm of this matrix exceeds this value. Thus, $\|(WW^*)^{-1}\|_2<\hat{\kappa}(4p-1)/r^{4p-1}<8p^2/r$. In turn, $\|ZZ^*\|_2< (s-1)^{4p-1}$, which yields $\|(WW^*)^{-1}\|_2\|ZZ^*\|_2\leq 8p^{2}s^{4p-1}/r\leq 0.5$. So, we have the following representation

\begin{align}\label{WW}
(VV^*)^{-1}=(WW^*-ZZ^*)^{-1}=\sum_{k=0}^{\infty}((WW^*)^{-1}ZZ^*)^k(WW^*)^{-1}=:(E_{2p}+X)(WW^*)^{-1},
\end{align}
where $E_{2p}$ is the identity matrix of size $2p$ and 

\begin{align}\label{X}
\|X\|_2<\frac{8p^2s^{4p-1}}{r-8p^2s^{4p-1}}.
\end{align}
 
Due to \cite[relation before Prop. 3]{EPS}, for entries of $W^{\dagger}=(WW^*)^{-1}W$ we have (taking into account that we enumerate from zero)

\begin{align}\label{W}
(W^{\dagger})_{q,k}=(-1)^q\sum_{w=q}^{2p-1}\frac{1}{w!}s(w+1,q+1)\sum_{t=w}^{2p-1}\frac{\binom{t+w}{w}\binom{r-w-1}{r-t-1}}{\binom{2t}{t}\binom{r+t}{2t+1}}\sum_{j=0}^{\min(t,k)} (-1)^{j+1}\binom{k}{j}\binom{j+t}{j}\binom{r-j-1}{r-t-1},
\end{align}
where $s(w+1,q+1)$ is the Stirling number of the first kind. Further, we have from Corollary \ref{cor}

\begin{align}\label{yy}
y_q^{2u}=(-4)^{q-u}\sum_{v=0}^{2u}(-1)^v\binom{2u}{v}(u-v)^{2q},
\end{align}
hence, there holds

\begin{align}\label{uff}
&(Y^*\tilde{N}_{p}W^{\dagger})_{2u,k}=(-4)^{-u}\sum_{v=0}^{2u} (-1)^v \binom{2u}{v} \sum_{w=0}^{2p-1}\frac{1}{w!}\sum_{q=0}^{\lfloor w/2\rfloor}(-1)^q (u-v)^{2q} s(w+1,2q+1)\nonumber\\
&\qquad\qquad\qquad\cdot\sum_{t=w}^{2p-1}\frac{\binom{t+w}{w}\binom{r-w-1}{r-t-1}}{\binom{2t}{t}\binom{r+t}{2t+1}}\sum_{j=0}^{\min(t,k)} (-1)^{j+1}\binom{k}{j}\binom{j+t}{j}\binom{r-j-1}{r-t-1}.
\end{align}

\subsection{Estimating $T^{\dagger}$.}
First, by the definition of Stirling numbers, $s(w+1,2q+1)$ is the coefficient at $x^{2q+1}$ of the polynomial $x(x+1)...(x+w)$, which is the same as to be the coefficient at $x^{2q}$ of the polynomial $(x+1)...(x+w)$. Thus,

\begin{align}\label{Ree}
\sum_{q=0}^{\lfloor w/2\rfloor}s(w+1,2q+1)(-1)^q(u-v)^{2q}=\Re (i(u-v)+1)...(i(u-v)+w)<\frac{(|u-v|+w)!}{|u-v|!}.
\end{align}

To obtain upper bounds for the sum
\begin{align*}
A(t,k,r):=\sum_{j=0}^{\min (k,t)}(-1)^j\binom{k}{j}\binom{j+t}{j}\binom{r-j-1}{r-t-1}
\end{align*}
that appears in \eqref{uff}, we need the following
\begin{lemma}\label{str} Let $t,q,k,r\in\mathbb{N}$ be such that $q\geq t\geq 2$. 
\\
a) If $r\geq q+2q^2$ and $r-q-1\geq k\geq q$, then 
\begin{align}\label{str1}
|A(t,k,r)|=\left |\sum_{j=0}^{\min (k,t)}(-1)^j\binom{k}{j}\binom{j+t}{j}\binom{r-j-1}{r-t-1}\right|\leq 4\left(\frac{q}{r-1-q}\right)^{q-t}\binom{r-1}{t}.
\end{align}
b) If $r\geq 2t^3+t$ and $k<t$, then
\begin{align}\label{str2}
|A(t,k,r)|< \binom{r-1}{t}.
\end{align}
c) If $r\geq 2L^t t^{1.5},$ where $L:=(\sqrt{2}+1)^{1+\frac{1}{\sqrt{2}}}(\sqrt{2}-1)^{-1+\frac{1}{\sqrt{2}}}2^{-\frac{1}{2\sqrt{2}}}$, and $t\leq k\leq r-1$, then 
\begin{align}\label{str3}
|A(t,k,r)|< 3\binom{r-1}{t}.
\end{align}
\end{lemma}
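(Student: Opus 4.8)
\emph{Overview and part (b).} The plan is to dispatch part~(b) by an elementary monotonicity argument and to reduce parts~(a),~(c) to a cancellation estimate for an alternating sum whose terms form a unimodal sequence. For (b), set $a_j=(-1)^j\binom{k}{j}\binom{j+t}{j}\binom{r-j-1}{r-t-1}$. Since $\binom{r-j-1}{r-t-1}=\binom{r-j-1}{t-j}>0$ throughout $0\le j\le\min(k,t)$, the $a_j$ strictly alternate in sign, $|a_0|=\binom{r-1}{t}$, and
\[
\frac{|a_{j+1}|}{|a_j|}=\frac{(k-j)(t+j+1)(t-j)}{(j+1)^2(r-j-1)} .
\]
When $k<t$ and $r\ge 2t^3+t$ we have $j+1\le k\le t-1$, so the numerator is at most $(kt)(2t)\le 2t^3-2t^2$ while the denominator is at least $r-k\ge r-t+1\ge 2t^3+1$; hence every ratio is $<1$, the sequence $(|a_j|)_{j\le k}$ is strictly decreasing, and the modulus of an alternating sum of a positive strictly decreasing sequence is smaller than its first term $\binom{r-1}{t}$ (for $k=0$ this is an equality).

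\emph{Reduction for (a),(c).} A short generating-function computation (carried out exactly as in the proof of Lemma~\ref{lem}, or verified by expanding both sides) shows that for $k\le r-1$ and $m:=r-k-1$,
\[
A(t,k,r)=\sum_{l=0}^{\min(k,t)}(-1)^l\binom{t}{l}\binom{k}{l}\binom{m}{t-l}=\bigl[x^ty^t\bigr]\,(1+x)^k(1+y)^m(x-y)^t .
\]
The right-hand coefficient is invariant, up to the sign $(-1)^t$, under $k\leftrightarrow m$, so $|A(t,k,r)|$ is unchanged under $k\leftrightarrow m$; in particular $A(t,k,r)$ is a polynomial in $r$ of degree $t$ with leading term $r^t/t!$, just like $\binom{r-1}{t}$, which is why an $O(1)\cdot\binom{r-1}{t}$ bound is plausible. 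Writing $x-y=x(1+y)-y(1+x)$ converts this into $A=(-1)^t\sum_{l=0}^{t}(-1)^l d_l$ with $d_l=\binom{t}{l}\binom{k+t-l}{t-l}\binom{m+l}{l}$, a product of three log-concave sequences, hence log-concave and unimodal. In case~(a) the hypothesis $k\le r-q-1$ forces $m\ge q\ge t$, which makes both endpoints $d_0=\binom{k+t}{t}$ and $d_t=\binom{m+t}{t}$ at most $\binom{r-1}{t}$; in case~(c) one may assume $k\ge m$, i.e.\ $k\ge(r-1)/2$, treating the (short) regime $m<t$ by a direct term count.

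\emph{Core estimate.} It remains to bound the alternating sum of the unimodal sequence $(d_l)$. Past the peak the sum is alternating and decreasing, so in modulus it is at most its first post-peak term, which is $\le\binom{r-1}{t}$; before the peak one must exploit the cancellation carried by the factor $(x-y)^t$, because a term-by-term estimate there overshoots by a factor that is a fixed constant $>1$ raised to the power $t$ (equivalently, a factor $\sqrt t$ per ``layer''). This cancellation is recovered either by comparing consecutive partial sums using the log-concavity $d_{l-1}d_{l+1}\le d_l^2$, or by deforming the two contours in $\frac{1}{(2\pi i)^2}\oint\!\oint (1+x)^k(1+y)^m(x-y)^t\,x^{-t-1}y^{-t-1}\,dx\,dy$ toward the saddle. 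Optimizing the remaining free radius (or threshold) produces the explicit constants: the extremal radius satisfies a quadratic whose relevant root is $1+\sqrt2$, which is precisely the source of $L=(1+\sqrt2)^2\,2^{-1/(2\sqrt2)}$ and of the requirement $r\ge 2L^t t^{1.5}$ behind the bound $3\binom{r-1}{t}$ in~(c); in~(a) the surplus $m\ge q$ (rather than merely $m\ge t$) upgrades the $O(1)$ bound to $4\,(q/(r-1-q))^{q-t}\binom{r-1}{t}$, each of the $q-t$ extra layers costing only a factor $\approx q/r$, and $r\ge q+2q^2$ is exactly what keeps $q/(r-1-q)<1/q$.

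\emph{Main obstacle.} The heart of the argument is this quantitative cancellation estimate in the core step: every naive bound loses an exponential-in-$t$ (or per-layer $\sqrt t$) factor, and squeezing out the genuine $O(1)$ constants — the $3$, the $4$, and the geometric decay rate $q/(r-1-q)$ — from the ``unimodal $\times$ alternating'' structure, while simultaneously verifying that the stated lower bounds on $r$ put the optimum in an admissible range, is where essentially all of the work lies.
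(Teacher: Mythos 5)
Your part (b) is correct and complete, and it is essentially the argument of the paper (ratio test on consecutive terms, then the alternating-series bound by the first term). Your rewriting of $A(t,k,r)$ as $(-1)^t\sum_{l=0}^{t}(-1)^l d_l$ with $d_l=\binom{t}{l}\binom{k+t-l}{t-l}\binom{m+l}{l}$, $m=r-k-1$, and the symmetry under $k\leftrightarrow m$, are also correct. But for parts (a) and (c) the proof is not there: everything hinges on the ``core estimate,'' and that step is only described, not carried out. Unimodality/log-concavity of $(d_l)$ cannot by itself give the claimed bounds, because the generic bound for an alternating sum of a unimodal sequence is its peak term, and the peak is exponentially larger than the target: for $k\approx m\approx r/2$ one has $d_l\approx\binom{t}{l}^2 2^{-t}\binom{r-1}{t}$, so $\max_l d_l\approx \frac{2^t}{t}\binom{r-1}{t}$, while you must prove $|A|<3\binom{r-1}{t}$. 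Thus genuine cancellation has to be quantified, and neither of your two proposed mechanisms (comparing partial sums via $d_{l-1}d_{l+1}\le d_l^2$, or a two-variable saddle-point contour deformation) is executed; in particular nothing in the sketch produces the constant $3$, the constant $4$, the geometric factor $\bigl(\frac{q}{r-1-q}\bigr)^{q-t}$ in (a), or verifies that the hypotheses $r\ge q+2q^2$ and $r\ge 2L^tt^{1.5}$ suffice. Your attribution of $L$ to an ``extremal radius $1+\sqrt2$'' is a guess (numerically $L=(1+\sqrt2)^2 2^{-1/(2\sqrt2)}$ is right, but no computation backs the claim), and bounding the endpoint terms $d_0,d_t$ by $\binom{r-1}{t}$ in (a) says nothing about the interior of the sum. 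You acknowledge this yourself in the closing paragraph, which confirms the main difficulty is left unresolved.

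For comparison, the paper does not try to beat the cancellation head-on in the $d_l$ (or $a_j$) form. For (a) it evaluates the coefficient by a double Cauchy integral and moves the $x$-contour past the pole at $x=1$; the residue converts $A$ into a new alternating sum $\sum_{m=0}^{t}(-1)^mD_m$ with $D_m=\frac{(r-1-k+m)!\,(t+k-m)!}{(r-t-1)!\,t!\,m!\,(t-m)!}$, whose terms are already individually small — monotonicity in $k$ reduces to the extreme cases $k=q$ and $k=r-1-q$, where $\max_m D_m\le\bigl(\frac{q}{r-1-q}\bigr)^{q-t}\binom{r-1}{t}$. For (c) it replaces the falling-factorial ratio by $\bigl(\frac{k+1}{r}\bigr)^j$, bounds the resulting main term by $2$ via a separate lemma on $\frac{1}{n!}\bigl((1+x)^n(1+\gamma x)^{-n-1}\bigr)^{(n)}\big|_{x=0}$ (using the identity $c_n^{\gamma}=(-1)^nc_n^{1-\gamma}$ and a Cauchy estimate on $|x|=1/\sqrt{\gamma}$), and bounds the replacement error by $1$ via Stirling's formula — this is exactly where $L$ and the hypothesis $r\ge 2L^tt^{1.5}$ enter. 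Some such transformation (or a fully worked saddle-point analysis with explicit constants) is what your argument is missing.
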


\begin{proof} a) We begin with estimate \eqref{str1} and the proof will be divided into several steps. 
\\

{\bf Step 1. Algebraic representation of $A(t,k,r)$.} It turns out that the sum of products of binomial coefficients $A(t,k,r)$ has the following algebraic meaning: it represents the coefficient at $x^t y^k$ of the Taylor expansion at zero of the function
\begin{align*}
G(x,y):=\frac{(1+y)^k}{(1+xy)^{t+1}(1-x)^{r-t}}.
\end{align*}
Indeed,
\begin{align*}
\binom{r-j-1}{r-t-1}&=\binom{r-j-1}{t-j}=\frac{(r-j-1)(r-j-2)...(r-t)}{(t-j)!}\\
&=(-1)^{t-j}\frac{(-(r-t))(-(r-t+1))...(-(r-j-1))}{(t-j)!}=(-1)^{t-j}\binom{-r+t}{t-j},
\end{align*}
so we have for $k\geq q\geq t$
\begin{align*}
A(t,k,r)=\sum_{j=0}^{\min (k,t)}(-1)^j\binom{k}{j}\binom{j+t}{j}\binom{r-j-1}{r-t-1}=\sum_{j=0}^{\min (k,t)}\binom{k}{j}\binom{-t-1}{j}(-1)^{t-j}\binom{-r+t}{t-j},
\end{align*}
which corresponds to the mentioned coefficient.

Take some $\varepsilon\in(0,1)$ and let $\delta=t/r$. By Cauchy's formulas,

\begin{align*}
&A(t,k,r)=\frac{1}{(2\pi i)^2}\int\limits_{|y|=\varepsilon}\int\limits_{|x|=\varepsilon}G(x,y)x^{-t-1}y^{-k-1}dx \;dy=\frac{1}{(2\pi i)^2}\int\limits_{|y|=\varepsilon}\frac{(1+y)^k}{y^{k+1}}\\
\cdot &\left(\int\limits_{|x|=\varepsilon^{\delta-1}}\frac{1}{(1+xy)^{t+1}(1-x)^{r-t}x^{t+1}}dx\; -2\pi i \;\text{res}_{x=1}\frac{1}{(1+xy)^{t+1}(1-x)^{r-t}x^{t+1}}\right)dy =:S_1+S_2.
\end{align*} 

{\bf Step 2. Estimating $S_2$.} We have
\begin{align*}
&\text{res}_{x=1}\frac{1}{(1+xy)^{t+1}(1-x)^{r-t}x^{t+1}}=\frac{(-1)^{r-t}}{(r-t-1)!}\left(\frac{1}{(1+xy)^{t+1}x^{t+1}}\right)^{(r-t-1)}\bigg|_{x=1}\\
&=\frac{(-1)^{r-t}}{(r-t-1)!}\sum_{l=0}^{r-t-1}\left(\frac{1}{x^{t+1}}\right)^{(l)}\left(\frac{1}{(1+xy)^{t+1}}\right)^{(r-t-1-l)}\bigg|_{x=1}\\
&=\frac{-1}{(r-t-1)!}\sum_{l=0}^{r-t-1}\frac{(t+l)!}{t!}\frac{(r-l-1)!}{t!}\frac{y^{r-t-1-l}}{(1+y)^{r-l}},
\end{align*}
hence, $S_2$ is the coefficient at $y^0$ of the Laurent expansion of the function
\begin{align*}
&\frac{(1+y)^k}{y^{k}}\frac{-1}{(r-t-1)!}\sum_{l=0}^{r-t-1}\frac{(t+l)!}{t!}\frac{(r-l-1)!}{t!}\frac{y^{r-t-1-l}}{(1+y)^{r-l}}\\
=&\frac{-1}{(r-t-1)!(t!)^2}\sum_{l=0}^{r-t-1}(t+l)!(r-l-1)!y^{r-t-1-l-k}\sum_{j=0}^{\infty}\binom{-r+l+k}{j}y^j.
\end{align*}
Note that for $l$ satisfying $r-t-1-l-k>0$, the coefficient of the corresponding term at $y^0$ is zero, therefore it suffices to consider just $l\geq r-t-1-k$. At the same time, if $-r+l+k\geq 0$, then $-(r-t-1-l-k)=-r+t+1+l+k>-r+l+k$, so $\binom{-r+l+k}{-(r-t-1-l-k)}=0$, which means that for $l\geq r-k$ the corresponding term is zero. Hence,

\begin{align*}
S_2&=\frac{-1}{(r-t-1)!(t!)^2}\sum_{l=r-t-1-k}^{r-k-1}(t+l)!(r-l-1)!\binom{-r+l+k}{-r+t+1+l+k}\\
&=-\sum_{l=r-t-1-k}^{r-k-1}(-1)^{r-t-1-l-k}\frac{(t+l)!(r-l-1)!(r-l-k-1-r+t+1+l+k)!}{(r-t-1)!(t!)^2(-r+t+1+l+k)!(r-l-k-1)!}\\
&=-\sum_{m=0}^{t}(-1)^m\frac{(r-1-k+m)!(t+k-m)!}{(r-t-1)!t!m!(t-m)!}=:-\sum_{m=0}^{t}(-1)^m D_m(r,t,k).
\end{align*}
{\bf Step 2.1. Estimating $D_m$.} Note that
\begin{align*}
\frac{D_m(r,t,k+1)}{D_m(r,t,k)}=\frac{t+k+1-m}{r-1-k+m},
\end{align*}
and since $q<\frac{r-t}{2}-1+m<r-q-1$, the maximum of the above expression is attained either at $k=q$ or at $k=r-1-q$.

For $k=q$, we have
\begin{align*}
\frac{D_{m+1}(r,t,q)}{D_m(r,t,q)}=\frac{(r-q+m)(t-m)}{(m+1)(t+q-m)}=\frac{rt-qt+mt-rm+qm-m^2}{mt+t+qm+q-m^2-m}>1,
\end{align*}
since
\begin{align*}
rt-qt-rm\geq r-qt\geq t+q\geq t+q-m.
\end{align*}
Thus, $D_m(r,t,q)$ is maximal at $m=t$ and 
$$D_t(r,t,q)=\frac{(r-1-q+t)!q!}{(r-t-1)!(t!)^2}\leq\left(\frac{q}{r-1-q}\right)^{q-t}\binom{r-1}{t}.$$
For $k=r-1-q$, we have
\begin{align*}
\frac{D_{m+1}(r,t,r-1-q)}{D_m(r,t,r-1-q)}=\frac{(q+m+1)(t-m)}{(m+1)(r-1+t-q-m)}=\frac{qt+mt+t-qm-m^2-m}{rm+r-2m-1+tm+t-qm-q-m^2}<1,
\end{align*}
in light of
\begin{align*}
rm+r-m-1+t-q\geq r-1-q\geq qt+t.
\end{align*}
Thus, $D_m(r,t,r-1-q)$ is maximal at $m=0$ and $$D_0(r,t,r-1-q)=\frac{q!(r-1+t-q)!}{(r-t-1)!(t!)^2}\leq\left(\frac{q}{r-1-q}\right)^{q-t}\binom{r-1}{t}.$$

Finally,
\begin{align*}
|S_2|\leq 4\left(\frac{q}{r-1-q}\right)^{q-t}\binom{r-1}{t}.
\end{align*}

{\bf Step 3. Estimating $S_1$.} Observe that for small enough $\varepsilon$ and $|y|=\varepsilon, \;|x|=\varepsilon^{\delta-1}$, the function $|G(x,y)x^{-t-1}y^{-k-1}|$ is equivalent to
\begin{align*}
\varepsilon^{-(\delta-1)(r-t)}\varepsilon^{-k-1}\varepsilon^{-(\delta-1)(t+1)},
\end{align*}
then
\begin{align*}
|S_1|\lesssim \varepsilon\varepsilon^{\delta-1}\varepsilon^{-(\delta-1)(r-t)-k-1-(\delta-1)(t+1)}=\varepsilon^{r-t-k}\leq \varepsilon^{q+1-t}\underset{\varepsilon\to 0}{\to}0,
\end{align*}
whence we get \eqref{str1}.

b) Turn now to \eqref{str2} for $r\geq 2t^3+t$ and $k<t$. We have

\begin{align*}
A(t,k,r)&=\sum_{j=0}^{\min(k,t)} (-1)^{j}\frac{k!}{j!(k-j)!}\frac{(j+t)!}{j!t!}\frac{(r-j-1)!}{(t-j)!(r-t-1)!}\\
&=\frac{(r-1)!}{t!(r-t-1)!}\sum_{j=0}^t(-1)^{j}\binom{t}{j}\binom{t+j}{j}\frac{k(k-1)...(k-j+1)}{(r-1)(r-2)...(r-j)}.
\end{align*}
For all $j$, there holds
\begin{align*}
\binom{t}{j}\binom{t+j}{j}\frac{k(k-1)...(k-j+1)}{(r-1)(r-2)...(r-j)}< t^j(2t)^j\frac{t^j}{(r-t)^j}\leq 1,
\end{align*}
since $r\geq 2t^3+t$. Note that the expression above decreases. Indeed, going from $j$ to $j+1$ we get our value changed in 
$$\frac{(j+t+1)(k-j)(t-j)}{(r-j-1)(j+1)^2}<\frac{2t\cdot t^2}{r-t}\leq 1$$
times. Therefore, we derive 
\begin{align*}
A(t,k,r)< \binom{r-1}{t}.
\end{align*}

c) Now we have only to prove \eqref{str3} under the mentioned conditions. Divide our sum into two sums in the following way:

\begin{align*}
A(t,k,r)&=\binom{r-1}{t}\sum_{j=0}^t(-1)^j\binom{t}{j}\binom{t+j}{j}\Big(\frac{k+1}{r}\Big)^j\\
&+\binom{r-1}{t}\sum_{j=1}^t(-1)^{j}\binom{t}{j}\binom{t+j}{j}\left(\frac{k...(k-j+1)}{(r-1)...(r-j)}-\Big(\frac{k+1}{r}\Big)^j\right)=:\binom{r-1}{t}(S_3+S_4).
\end{align*}
{\bf Step 1. Estimating $S_3$.} Since $$\binom{t+j}{j}=\frac{(t+j)(t+j-1)...(t+1)}{j!}=(-1)^j\frac{(-t-1)(-t-2)...(-t-j)}{j!}=(-1)^j\binom{-t-1}{j},$$
we have
\begin{align*}
S_3=\sum_{j=0}^t \binom{t}{t-g}\binom{-t-1}{j}\Big(\frac{k+1}{r}\Big)^j=\frac{1}{t!}\Big((1+x)^t\Big(1+\Big(\frac{k+1}{r}\Big) x\Big)^{-t-1}\Big)^{(t)}\big|_{x=0}.
\end{align*}
To estimate this value, we will need the following 

\begin{lemma}\label{g} For any positive integer $n$ and any $\g\in (0,1)$, there holds

\begin{align*}
|c^{\g}_n|:=\frac{1}{n!}\Big|\Big((1+x)^n(1+\g x)^{-n-1}\Big)^{(n)}\Big|_{x=0}\Big|\leq 2
\end{align*}
and $|c^{0}_n|=|c^{1}_n|= 1$.
\end{lemma}

\begin{proof}
Fix some $n\in\mathbb{N}$. We have

\begin{align*}
h(x):=(1+x)^n(1+\g x)^{-n-1}=\left(\frac{1}{\g}+\frac{1-\frac{1}{\g}}{1+\g x}\right)^n\frac{1}{1+\g x}=\sum_{g=0}^n\binom{n}{g}\left(\frac{1}{\g}\right)^g\frac{\left(1-\frac{1}{\g}\right)^{n-g}}{(1+\g x)^{n-g+1}},
\end{align*}
whence

\begin{align*}
\frac{1}{n!}h^{(n)}(x)=\frac{1}{n!}\sum_{g=0}^n \binom{n}{g}\frac{\left(1-\frac{1}{\g}\right)^{n-g}}{\g^g}\frac{(-1)^n\g^n}{(1+\g x)^{2n-g+1}}\frac{(2n-g)!}{(n-g)!}.
\end{align*}

Making the change of variable $r=n-g$, we obtain

\begin{align*}
&\frac{1}{n!}h^{(n)}(0)=\frac{(-1)^n}{n!}\sum_{r=0}^n (\g-1)^{r}\binom{n}{n-r}\frac{(n+r)!}{r!}=(-1)^n\sum_{r=0}^n(1-\g)^r(-1)^r\frac{(n+r)!}{(n-r)!(r!)^2}\\
&=(-1)^n\sum_{r=0}^n(1-\g)^r(-1)^r \binom{n+r}{r}\binom{n}{n-r}=(-1)^n\sum_{r=0}^n(1-\g)^r \binom{-n-1}{r}\binom{n}{n-r}\\
&=(-1)^n\frac{1}{n!}\Big((1+x)^n(1+(1-\g) x)^{-n-1}\Big)^{(n)}\Big|_{x=0}.
\end{align*}

Hence, $c^{\g}_n=(-1)^nc^{1-\g}_n$. Therefore, it is enough to prove the claim only for $\g\in (0,1/2]$, and separately, for $\g=0$. 

Let $\g\in(0,1/2]$. Note that the function $h(x):=(1+x)^n(1+\g x)^{-n-1}$ is analytic inside the circle of radius $1/\sqrt{\g}$. Let $x=(a+bi)/\sqrt{\g},\;a^2+b^2=1$, then

\begin{align*}
\bigg|\frac{1+x}{1+\g x}\bigg|^2=\frac{1+\frac{a^2}{\g}+\frac{2a}{\sqrt{\g}}+\frac{b^2}{\g}}{1+\g a^2+2\sqrt{\g}a+\g b^2}=\frac{1}{\g}.
\end{align*}

Thus, the maximum of the function $h(x)$ on the circle $|x|=1/\sqrt{\g}$ cannot exceed $\g^{-n/2}/(1-\g)$ and due to Cauchy's inequalities,

\begin{align*}
|c_n^{\gamma}|\leq (1/\sqrt{\g})^{-n}\g^{-n/2}/(1-\g)=(1-\g)^{-1}\leq 2.
\end{align*} 
For $\g=0$, we have $h(x)=(1+x)^n=\sum_{k=0}^{n}\binom{n}{k} x^k$, whence $c_n=1$.
\end{proof}

Thus, Lemma \ref{g} gives

\begin{align}\label{S_3}
|S_3|\leq 2.
\end{align} 

{\bf Step 2. Estimating $S_4$.} For any $j=1,...,t$, there holds

\begin{align}\label{appr}
0&<\Big(\frac{k+1}{r}\Big)^j-\frac{k...(k-j)}{(r-1)...(r-j)}<\Big(\frac{k+1}{r}\Big)^{j}-\Big(\frac{k+1-t}{r}\Big)^j<\frac{j(k+1)^{j-1}}{r^{j}}\leq \frac{1}{r}.
\end{align}
By Stirling's formula,

\begin{align}\label{St}
\binom{t}{j}\binom{t+j}{j}=\frac{(t+j)!}{(t-j)!(j!)^2}<\sqrt{t+j}\frac{(t+j)^{t+j}}{(t-j)^{t-j}j^{2j}}&\leq \sqrt{t+\frac{t}{\sqrt{2}}}\left(\frac{\big(1+\frac{1}{\sqrt{2}}\big)^{1+\frac{1}{\sqrt{2}}}}{\big(1-\frac{1}{\sqrt{2}}\big)^{1-\frac{1}{\sqrt{2}}}2^{-\frac{1}{2\sqrt{2}}}}\right)^t\nonumber\\
&=\sqrt{t+\frac{t}{\sqrt{2}}}L^t.
\end{align}

Combining \eqref{appr} and \eqref{St}, we get

\begin{align*}
|S_4|\leq t\cdot \sqrt{t+\frac{t}{\sqrt{2}}}L^t\cdot\frac{1}{r}<2L^{t}\frac{t^{1.5}}{r}<1
\end{align*}
for $r\geq 2L^tt^{1.5}$, which along with \eqref{S_3} gives us relation \eqref{str3}.
\end{proof}

Let us turn back to the entries of the matrix $T^{\dagger}$. In view of \eqref{Ree}, we derive from \eqref{uff} and Lemma \ref{str}

\begin{align*}
&|(Y^*\tilde{N}_pW^{\dagger})_{2u,k}|<2\cdot 4^{-u}\sum_{v=0}^u\binom{2u}{u-v}\sum_{w=0}^{2p-1}\frac{(v+w)!}{v!w!}\sum_{t=w}^{2p-1}\frac{\binom{t+w}{w}\binom{r-w-1}{r-t-1}}{\binom{2t}{t}\binom{r+t}{2t+1}}4\binom{r-1}{t}\tau(k)\\
&=8(r-1)!\sum_{v=0}^u\sum_{w=0}^{2p-1}\sum_{t=w}^{2p-1}\frac{(u!)^2(v+w)!(t+w)!(r-w-1)!(2t+1)}{(u-v)!(u+v)!v!(w!)^2(r-t-1)!(t-w)!(r+t)!}\tau(k),
\end{align*} 
where
\begin{equation*}
\tau(k)=\begin{cases}
1,\quad\text{if}\;k\leq 2p-1\;\text{or}\;k\geq r-2p,\\
r^{\frac{-q+2p-1}{2}},\quad\text{if} \;k= q\;\text{or}\;k=r-1-q,\;2p-1\leq q\leq\frac{\sqrt{r}}{2},\\
r^{\frac{-\sqrt{r}/2+2p-1}{2}},\quad\text{if} \;\frac{\sqrt{r}}{2} <k<r-1-\frac{\sqrt{r}}{2}.
\end{cases}
\end{equation*}

Going from $w-1$ to $w$, the corresponding product changes in 
\begin{align*}
\frac{(v+w)(t+w)(t-w+1)}{(r-w)w^2}<\frac{(4p)^3}{r-2p}<1
\end{align*}
times, hence, the maximum is attained at $w=0$. So,

\begin{align*}
&|(Y^*\tilde{N}_pW^{\dagger})_{2u,k}|<8(r-1)!\;2p\sum_{v=0}^u\sum_{t=0}^{2p-1}\frac{(u!)^2(r-1)!(2t+1)}{(u-v)!(u+v)!(r-t-1)!(r+t)!}\tau(k)\\
&<16p\cdot 4p\cdot(u+1)\sum_{t=0}^{2p-1}\frac{((r-1)!)^2}{(r-t-1)!(r+t)!}\tau(k)<16p\cdot 4p\cdot p\cdot 2p\frac{1}{r}\tau(k)=\frac{128p^4\tau(k)}{r}.
\end{align*}
Similarly, from \eqref{W} we have 

\begin{align*}
|(W^{\dagger})_{qk}|&\leq \sum_{w=q}^{2p-1}(w+1)\sum_{t=w}^{2p-1}\frac{\binom{t+w}{w}\binom{r-w-1}{r-t-1}}{\binom{2t}{t}\binom{r+t}{2t+1}}4\binom{r-1}{t}\tau(k)\\
&=\sum_{w=q}^{2p-1}(w+1)(r-1)!\sum_{t=w}^{2p-1}\frac{(t+w)!(r-w-1)!(2t+1)}{w!(r-t-1)!(t-w)!(r+t)!}\tau(k).
\end{align*}
Here going from $w-1$ to $w$ our term changes in $(t+w)(t-w+1)/w(r-w)<2(2p)^2/(r-2p)<1$ times, so,

\begin{align}\label{W_est}
|(W^{\dagger})_{qk}|\leq 2p\cdot 2p\cdot 4p\sum_{t=0}^{2p-1}\frac{((r-1)!)^2}{(r-t-1)!(r+t)!}\tau(k)<\frac{32p^4\tau(k)}{r}.
\end{align}

Since an entry of $Y^*$ does not exceed in absolute value $2p\cdot p^{4p-2}$ (see \eqref{yy}), then an entry of $Y^*\tilde{N}_pX$ is less than or equal in absolute value to $2p\cdot 2p^{4p-1}\cdot 8p^2s^{4p-1}/(r-8p^2s^{4p-1})$ (here we used estimate \eqref{X}). So an entry of $Y^*\tilde{N}_pXW^{\dagger}$ does not exceed $2p\cdot 32p^{4p+2}(8p^2s^{4p-1}/(r-8p^2s^{4p-1}))\cdot 32p^4/r$. Thus, for $r-1\geq k>s-1$, according to \eqref{WW} we have

\begin{align}\label{YV_final_1}
|(Y^*\tilde{N}_pV^{\dagger})_{2u,k-s}|=|(Y^*\tilde{N}_p(E+X)W^{\dagger})_{2u,k}|&\leq |(Y^*\tilde{N}_pW^{\dagger})_{2u,k}|+|(Y^*\tilde{N}_pXW^{\dagger})_{uk}|\nonumber\\
&<\frac{2^7p^4\tau(k)}{r}+\frac{2^{15}p^{4p+9}s^{4p-1}}{(r-8p^2s^{4p-1})r}.
\end{align}
For $r-1\geq k>s-1$ and an odd $u$, due to \eqref{W_est}
\begin{align}\label{YV_final_2}
|(Y^*\tilde{N}_pV^{\dagger})_{u,k-s}|\leq \max\limits_{i}|(V^{\dagger})_{i,k-s}|\leq 2\max\limits_{i}|(W^{\dagger})_{ik}|<2 \frac{32p^4\tau(k)}{r}.
\end{align}
From \eqref{YV_final_1} and \eqref{YV_final_2} we finally get
\begin{align*}
\|Y^*\tilde{N}_pV^{\dagger}\|_{\infty}&\leq r\cdot\frac{2^{15}p^{4p+9}s^{4p-1}}{(r-8p^2s^{4p-1})r}+\frac{2^7p^4}{r}\Big(\sum_{k=0}^{2p-1}+\sum_{k=r-2p}^{r-1}+\sum_{k=2p}^{\lfloor \sqrt{r}/2\rfloor}+\sum_{k=r-1-\lfloor \sqrt{r}/2\rfloor}^{r-2p-1}+\sum_{k=\lfloor \sqrt{r}/2\rfloor+1}^{r-\lfloor \sqrt{r}/2\rfloor-2}\Big)\tau(k)\\
&<\frac{2^{15}p^{4p+9}s^{4p-1}}{r-8p^2s^{4p-1}}+\frac{2^7p^4}{r}(2p+2p+1+1+r\cdot r^{-\frac{\sqrt{r}}{4}+p-\frac{1}{2}})<\frac{2^{16}p^{4p+9}s^{4p-1}}{r},
\end{align*}
whence in light of \eqref{dag} condition \eqref{cond_2} follows, and the needed is proved.
\end{proof}

\begin{acknowledgements} I am grateful to Lavrentin Arutyunyan for the lively and fruitful discussion which resulted in the proof of Lemma \ref{g}.
\end{acknowledgements}


\begin{thebibliography}{7}

\bibitem[1]{BG} Ben-Israel A., Greville T.N.E., {\it Generalized inverses: Theory and applications}, 2nd edition, Springer, New York, 2003.

\bibitem[2]{BP} Boyd J.P., Petschek R., {\it The relationships between Chebyshev, Legendre and Jacobi polynomials: The generic superiority of Chebyshev polynomials and three important exceptions}, J. Sci. Comput. 59  (2014), 1--27.

\bibitem[3]{EPS} Eisinberg A., Pugliese P., Salerno N., {\it Vandermonde matrices on integer nodes: the rectangular case}, Numer. Math. 87  (2001), 663--674.

\bibitem[4]{I} Ismail M.E.H., {\it Classical and quantum orthogonal polynomials in one variable}, Encyclopedia of Mathematics and its Applications, vol. 98, Cambridge: Cambridge University Press, 2005.

\bibitem[5]{MS}
Macon N., Spitzbart A., {\it Inverses of Vandermonde matrices}, The American Mathematical Monthly, 65(2)  (1958), 95--100.

\bibitem[6]{HM}
Mason J.C., Handscomb D.C., {\it Chebyshev polynomials}, Chapman and Hall/CRC (2002).

\bibitem[7]{PK} 
Pantelous A.A., Karageorgos A.D., {\it Generalized inverses of the vandermonde matrix: Applications in control theory}, Int. J. Control Autom. Syst. 11  (2013), 1063--1070.


\bibitem[8]{T} 
Tetunashvili T., {\it Universal series and subsequences of functions}, Sb. Math., 209(10)  (2018), 1498--1532.

\bibitem[9]{V} 
Verde-Star L., {\it Inverses of generalized Vandermonde matrices}, J. Math. Anal. Appl., 131(2)  (1988), 341--353.
\end{thebibliography}
\end{document}